\newtheorem{theorem}{Theorem}
\newtheorem{lemma}[theorem]{Lemma}
\newcommand{\vc}{\mathbf}
\newcommand{\norm}[1]{\left\lVert#1\right\rVert}
\begin{document}

\title{Optimization techniques for  multivariate least trimmed absolute deviation estimation}

\author{
 G. Zioutas, C. Chatzinakos, L. Pitsoulis\thanks{work of this author was conducted at National Research University Higher School of Economics and supported by RSF grant 14-41-00039}\\
  Department of Electrical and Computer Engineering \\
  University of Thessaloniki, Greece
  \and
  T.D. Nguyen\\
  Southampton Management School\\
  United Kingdom
}

\maketitle

\begin{abstract}
Given a dataset an outlier can be defined as an observation that it is unlikely to follow the statistical properties of the majority of the data. 
Computation of the location estimate of is fundamental in data analysis, and it is well known in statistics that classical methods, such as taking the sample
average, can be greatly affected by the presence of outliers in the data. Using the median instead of the mean can partially resolve this issue but not completely.
For the univariate case, a robust version of the median is the Least Trimmed Absolute Deviation (LTAD) robust estimator introduced in~\cite{Tableman1994}, 
which has desirable asymptotic properties such as robustness, consistently, high breakdown and normality. There are different generalizations of the LTAD for
multivariate data, depending on the choice of norm. In~\cite{ChaPitZiou:2015} we present such a generalization using the Euclidean norm and propose a 
solution technique for the resulting combinatorial optimization problem, based on a necessary condition, that results in a highly convergent local search algorithm. 
In this subsequent work we use the $L^1$ norm to generalize the LTAD to higher dimensions, and show that the resulting mixed integer programming problem
has an integral relaxation, after applying an appropriate data transformation. Moreover, we utilize the structure of the problem to show that the resulting LP's can 
be solved efficiently using a subgradient optimization approach. The robust statistical properties of the proposed estimator are verified by extensive computational results.

\vspace*{.25in}
\noindent
{\bf Keywords:} robust location estimation; least trimmed absolute deviation; outlier detection; linear programming; mixed integer programming
\end{abstract}

\section{Introduction}

The sample average and standard deviation are the classical estimators of the location and the
scale parameters of a statistical distribution. It is well-known that these classical estimators, although being optimal under normality assumptions, are extremely sensitive to 
the presence of outliers in the data;  a small proportion of outliers in the data can have a large distorting effect on the sample mean and covariance. 
Robust statistics is concerned with the development of methods for computing estimators that  are justifiably resistant to the presence of outliers in the data. 
The focus of this work is to estimate the unknown location 
parameter $\vc{m}$ of a family of distributions $F_\vc{m}$ given some observational data which is contaminated with an unknown number  of outliers.

Detecting outliers and unusual data structures is one of the main problems in statistical data analysis since this occurs in many different application domains. 
One such application is in large scale complex networks, where the graph data may arrive in streams~\cite{Aggar:2013, Aggar:2011}. Given that the degrees in
social networks typically follow a power law distribution, it is of importance to identify outlier nodes which do not follow the degree distribution of the majority
of the nodes, since these nodes may affect the computation of several graph characteristics such as community detection, clustering coefficient etc.

Projection pursuit is one of the typical approaches for outlier detection. The idea is to repeatedly project the multivariate data into the univariate space since 
univariate outlier detection is much simpler to handle by applying order statistics and visualization. 
 Such methods are usually computationally intensive, but they are particularly useful for high-dimensional  data with small sample size. 
One such technique is the principal components analysis in~\cite{Filzmoser20081694}.
Outlier detection can also be done based on estimations of the covariance matrix. The idea is to use estimated covariance structure in order to find a distance, usually 
the well-known Mahalanobis distance, from each observation to the center of the data cloud. One such method is the Minimum Covariance Determinant (MCD) 
introduced in \cite{Rousseeuw1985} and in \cite{Rousseeuw1998}.
Desirable properties for an estimator include high breakdown values, high efficiency, and fast computation. One famous robust location estimator is the multivariate 
Least Euclidean Distance (LED) as studied in \cite{Hettmansperger2002}. Other methods for robust location estimation include the transformation median 
(\cite{chakraborty1998}) and the Oja Multivariate half samples Median (HOMM) \cite{Oja1983}. The corresponding univariate cases of the half samples (HOMM) and 
MCD are the Least trimmed Absolute Deviation (LTAD) and Least Trimmed Squared (LTS) estimators respectively.

The LTAD robust estimator for  univariate data was introduced in~\cite{Tableman1994}, where it was shown that it has desirable asymptotic properties such as robustness, 
consistently, high breakdown and normality. Moreover,  in~\cite{Tableman1994} the author also presents an algorithm to efficiently compute the LTAD
in $\mathcal{O}(n\log n)$ time. However, these methods do not generalize to higher dimensions. 
In~\cite{ChaPitZiou:2015} the LTAD is generalized to handle  multivariate data using the Euclidean norm, and the resulting combinatorial optimization problem
is solved by an approximate fixed-point like iterative procedure. 
Computational experiments in~\cite{ChaPitZiou:2015} on both real and artificial data  indicate that the proposed method efficiently identifies both location and scatter 
outliers in varying dimensions and high degree of contamination. 
In this work we extend the results in~\cite{ChaPitZiou:2015}, and present a different generalization of LTAD which is based on the $L^1$ norm. It is shown
that  the linear programming relaxation of the resulting mixed integer program is integral, after applying an appropriate equidistance data transformation. 
This implies that the LTAD can be computed as a series of linear programs, which can  be solved efficiently using a subgradient optimization approach.

The rest of this paper is structured as follows. In section~\ref{sec_LTAD} we present the generalized LTAD for multivariate data using the $L^1$ norm while  it's
mixed integer programming formulation is given in section~\ref{sec_MIP_LTAD}. The integrality of the relaxation is presented in section~\ref{sec_LP_LTAD}, along
with the procedure to perform the data transformation. The subgradient optimization approach for solving the related linear programs is presented in 
section~\ref{sec_LPs} and in section~\ref{sec_computations} we perform computational experiments in real and simulated data to verify the performance
of the proposed estimator. Conclusions are given in~\ref{sec_conclusions}.

\section{Least trimmed absolute deviation estimator} \label{sec_LTAD}
Given a sample of $n$ univariate observations $X_{n} = \{ x_{1}, x_{2}, \ldots, x_{n} \}$ where $x_i \in \mathbb{R}, i=1,\ldots, n$ we can
state the well known location parameter {\em median} as follows: 
\begin{eqnarray}\label{eq_median}
m(X_{n}) =  \arg\min_{m} & &  \sum_{i=1}^{n} | x_{i} - m | 
\end{eqnarray}
Letting $(\cdot)$ to denote the  order of the data points, a trivial solution to (\ref{eq_median}) is 
\[
x_{(1)} \leq x_{(2)} \leq \cdots \leq x_{(n)}
\]
then
\[
m(X_{n}) = 
\left\{ 
\begin{array}{ll}
x_{\left( \frac{n+1}{2} \right) } & \textrm{if  $n$ is odd,} \\
\frac{1}{2} \left(  x_{\left( \frac{n}{2} \right) }  + x_{\left( \frac{n+1}{2} \right) } \right)& \textrm{if  $n$ is even.}
\end{array} \right.
\]
The {\em mean} is another example of a location estimator for a data set $X$, as well as the {\em least median of squares}~\cite{rousseeuw1} which is the 
midpoint of the subset that contains half of the observations with the smallest range. 
Three statistically desirable properties of an estimator are  {\em equivariance}, {\em monotonicity} and {\em 50\% breakdown point}. 
Equivariance implies that if the data points are scaled and shifted then the value of the estimator will change accordingly, while monotonicity implies that the estimator cannot decrease in value if an observation increases. An estimator has 50\% breakdown point if its value will be bounded for any arbitrary change
of less than half of the observations. Basset~\cite{Bassett:1991} has proven that the median is the only estimator that satisfies all three properties.

If we make the assumption that  $n-h$ of observations are outliers, where $h > \lceil n/2 \rceil$, we can define a robust version of the 
median which will be called {\em least trimmed absolute deviations} (LTAD) estimator, defined by the following problem: 
\begin{eqnarray}\label{eq_ltad}
m(X_{n},h) =  \arg\min_{m,T} & &  \sum_{x\in T}   | x - m |  \\ \nonumber
                \mbox{s.t.}     & &   |T|=h \\ \nonumber
                                      &  & T\subseteq X_{n}
\end{eqnarray}
which implies that we have  to find that subset $T$ of $h$ observations out of $n$ which have the least median value. In order to satisfy the high 
breakdown property, the 
value of $h$ is set to $\lceil n/2 \rceil$.  Solving  (\ref{eq_ltad}) by complete enumeration would require the computation of the median 
for all possible ${n \choose h}$ subsets $T\subseteq X_{n}$ and choosing the one with the minimum value, which is computationally 
infeasible even for moderate values of $n$ and $p$. 
The LTAD was introduced by Tableman~\cite{Tableman1994} for fixed $h=\lceil n/2 \rceil$, where in addition to showing favorable theoretical properties the 
author also provides a simple procedure for its computation based on the observation that 
the solution to \eqref{eq_ltad} will be the median of one of the following $n-h$ contiguous subsets 
\[
\{x_{(1)}, \ldots, x_{(h)} \},  \{x_{(2)}, \ldots, x_{(h+1)} \}, \ldots, \{x_{(n-h)}, \ldots, x_{(n)} \}.
\]
Therefore, it suffices to compute the $n-h$ median values for the above subsets, a process that will require $\mathcal{O}(n\log n)$ time to order
the data points according to increasing value.

Consider now the multidimensional version of the LTAD defined in \eqref{eq_ltad}, 
where we have $p$-variate observations $X_{n} = \{\vc{x}_1, \ldots, \vc{x}_{n} \}$ with
$\vc{x}_{i} \in \mathbb{R}^{p}, i=1,\ldots, n$. Moreover,  without loss of generality we can assume that the observations  are rescalled. 
The multivariate LTAD is defined as
\begin{eqnarray} 
\displaystyle \text{LTAD}:\vc{m}(X_{n},h) =  \arg\min_{\vc{m},T} & &  \sum_{\vc{x}\in T}   \| \vc{x} - \vc{m} \|_{1}   \label{eq_ltad_p} \\ 
                \mbox{s.t.}     & &   |T|=h \nonumber \\ 
                                      &  & T\subseteq X_{n} \nonumber
\end{eqnarray}
where $\| \cdot \|_{1}$ stands for the one norm i.e. $\norm{\vc{x}-\vc{m}}_1 = \sum_{i=1}^p |x_ i - m_i|$. 
For the ease of exposition in the rest of the paper, we refer to both the univariate and the multivariate LTAD as the {\em LTAD problem}.

The LTAD problem can be approximated by an iterative algorithm similar to the procedure described in~\cite{ChaPitZiou:2015} for 
solving the related {\em least trimmed Euclidean distances} (LTED) estimator, which is defined as the LTAD in \eqref{eq_ltad_p} with the only 
exception that the euclidean norm is used instead of the one norm.  The same necessary optimality condition that is proved for
LTED in~\cite{ChaPitZiou:2015} can also be proved for the LTAD, which leads to a highly convergent heuristic algorithm.  
However, although this algorithm is very fast, it almost always converges to a local optimum of unknown quality.

In this paper we present a different solution method for the LTAD, by approximating its natural mixed integer nonlinear programming formulation
with with a mixed integer linear program whose linear programming relaxation is integral. We also develop specialized efficient solution methods for the
resulting linear program, since the iterative nature of the proposed method requires multiple calls for solving them.

\section{Mixed integer programming formulation}\label{sec_MIP_LTAD}
The LTAD estimate \eqref{eq_ltad_p} can be equivalently stated as the following mixed integer nonlinear programming problem 
\begin{eqnarray}\label{eq_ltad_ip_1}
\displaystyle \text{MINLP-LTAD}: ~\min_{\vc{w}, \vc{m}} & &  \sum_{i=1}^{n}w_{i} \norm{\vc{x}_{i}-\vc{m}}_1 \\ \nonumber
                \mbox{s.t.}     & &  \sum_{i=1}^{n} w_{i} = h \\ \nonumber
                                       & &  w_i \in \{0,1\}, i = 1,\ldots,n. 
\end{eqnarray}
where the zero-one weights  $\vc{w}=(w_{1}, \ldots, w_{n})$ indicate whether observation $i$ is an outlier ($w_i=0$) or a good observation ($w_i=1$). 
For any feasible tuple $(\vc{w}, \vc{m})$ to \eqref{eq_ltad_ip_1}, let $\vc{x}_{(i)}$ denote the vector  $\vc{x} \in X_{n}$ with the $i$-th smallest $\norm{\vc{x}-\vc{m}}_1$
value, and $w_{(i)}$ its corresponding weight. We can now write \eqref{eq_ltad_ip_1} as follows
\begin{eqnarray*}
\sum_{i=1}^{n}w_{i}\norm{\vc{x}_{i}-\vc{m}}_1 &=& \sum_{i=1}^{h}\norm{\vc{x}_{(i)}-\vc{m}}_1 \\
&=& \sum_{i=1}^{h}\norm{w_{(i)} \vc{x}_{(i)}-\vc{m}}_1\\
&=& \sum_{i=1}^{n}\norm{w_i \vc{x}_{i}-\vc{m}}_1 - (n-h) \norm{\vc{m}}_1.
\end{eqnarray*}
since $w(i) = 1$ for all $i=1,\ldots, h$. 
Observe that as $\vc{m}$ approaches zero, then $\sum_{i=1}^{n}\norm{w_i \vc{x}_{i}-\vc{m}}_1$ approaches
$\sum_{i=1}^{n} w_{i}\norm{\vc{x}_{i}-\vc{m}}_1$, thus, for small values of $\vc{m}$ problem \eqref{eq_ltad_ip_1} can be approximated by the following 
\begin{eqnarray*}
\min_{\vc{w}, \vc{m}} & &  \sum_{i=1}^{n} \norm{ w_{i}\vc{x}_{i}-\vc{m}}_1 \\ 
                \mbox{s.t.}     & &  \sum_{i=1}^{n} w_{i} = h \\
                                       & &  w_i \in \{0,1\}, i = 1,\ldots,n. 
\end{eqnarray*}
which is equivalent to the following mixed integer linear program
\begin{eqnarray}
\displaystyle \text{MILP-LTAD}: ~\min_{\vc{w},\vc{m}} &  &  \sum_{i=1}^{n} \sum_{j=1}^{p}  {d}_{ij} \label{eq_ltad_ip_3} \\
                \mbox{s.t.}      & & \sum_{i=1}^{n} w_{i} = h   \nonumber \\
                                       & & w_i x_{ij} - m_j - d_{ij} \leq 0, \;\; i = 1,\ldots,n,  j = 1,\ldots,p \nonumber \\
     		                       & & -w_i x_{ij} + m_j - d_{ij} \leq 0,  \;\; i = 1,\ldots,n,  j = 1,\ldots,p \nonumber \\
                                        & & \vc{w} \in \{0,1\}^{n} \nonumber
\end{eqnarray}
where $\vc{D}$ is an $n\times p$ matrix with values $d_{ij} = |w_i x_{ij}-m_j|$, and $X=(x_{ij})$ is the $n\times p$ observations matrix whose rows are
$\vc{x}^{T}_{i}$ for $i=1,\ldots,n$. 

There are two issues with approximating \eqref{eq_ltad_ip_1} with \eqref{eq_ltad_ip_3}. First of all, we need to ensure that MILP-LTAD is a good 
approximation of the MINLP-LTAD. Secondly, we need to be able to solve \eqref{eq_ltad_ip_3} efficiently. We will resolve the first issue by iteratively 
transforming the data such that the optimal $\vc{m}$ approaches zero. For the second issue we will show that the resulting mixed integer linear programming
problem is equivalent to a linear programming problem under certain assumptions.

\section{Data transformation} \label{sec_LP_LTAD}

Denote with LP-LTAD the linear programming relaxation of \eqref{eq_ltad_ip_3} where $\vc{w} \in [0,1]^{n} $, 
Consider the linear programming relaxation of \eqref{eq_ltad_ip_3}
\begin{eqnarray}
\displaystyle \text{LP-LTAD}: ~\min_{\vc{w},\vc{m}} &  &  \sum_{i=1}^{n} \sum_{j=1}^{p}  {d}_{ij} \label{eq_ltad_ip_4} \\
                \mbox{s.t.}      & & \sum_{i=1}^{n} w_{i} = h   \nonumber \\
                                       & & w_i x_{ij} - m_j - d_{ij} \leq 0, \;\; i = 1,\ldots,n,  j = 1,\ldots,p \nonumber \\
     		                       & & -w_i x_{ij} + m_j - d_{ij} \leq 0,  \;\; i = 1,\ldots,n,  j = 1,\ldots,p \nonumber \\
                                        & & \vc{w} \in [0,1]^{n} \nonumber
\end{eqnarray}
Let $(\vc{w}^*_{LP},\vc{m}^*_{LP})$ be the optimum solution of LP-LTAD. If $\vc{w}^*_{LP}$ is integer, then this LP solution is also an 
optimal solution of \eqref{eq_ltad_ip_3}.

We show next, that if in the linear programming  optimum solution $\vc{m}^*_{LP}$ is equal to zero, then $(\vc{w}^{*}_{LP},\vc{m}^*_{LP})$ is optimal for the
MILP-LTAD; that is, we can solve the linear programming relaxation and use it to obtain an optimal solution for the MILP in \eqref{eq_ltad_ip_3}. 

\begin{lemma}\label{lemma1}
For any $\vc{x}$, if $\vc{m}^*_{LP}=\vc{0}$, then $(\vc{w}_{LP}^{*},\vc{m}^*_{LP})$ is an optimal solution of MILP-LTAD.
\end{lemma}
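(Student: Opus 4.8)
The plan is to exploit that $\text{LP-LTAD}$ is a relaxation of $\text{MILP-LTAD}$, so their optimal values satisfy $z_{LP} \le z_{MILP}$; hence it suffices to show that the hypothesis $\vc{m}^*_{LP} = \vc{0}$ forces an integral optimal weight vector, in which case that integral point is feasible for $\text{MILP-LTAD}$, attains the relaxation value, and is therefore optimal for the integer program. First I would substitute $\vc{m} = \vc{0}$ into the constraints of \eqref{eq_ltad_ip_4}. The two inequality families collapse to $d_{ij} \ge w_i x_{ij}$ and $d_{ij} \ge -w_i x_{ij}$, and since every $w_i \ge 0$ these are together equivalent to $d_{ij} \ge w_i |x_{ij}|$. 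Because the objective minimizes $\sum_{i,j} d_{ij}$, each $d_{ij}$ is driven to equality, so at the optimum $d_{ij} = w_i |x_{ij}|$ and the objective reduces to $\sum_{i=1}^{n} w_i \sum_{j=1}^{p} |x_{ij}| = \sum_{i=1}^{n} w_i \norm{\vc{x}_i}_1$.

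Next I would observe that, once $\vc{m}$ is fixed to $\vc{0}$, the remaining problem is the linear program of minimizing $\sum_{i=1}^{n} c_i w_i$ with the nonnegative coefficients $c_i = \norm{\vc{x}_i}_1$ over the feasible set $\{\, \vc{w} \in [0,1]^n : \sum_{i=1}^{n} w_i = h \,\}$. This set is the hypersimplex, and the key structural fact I would invoke is that its extreme points are exactly the $0$--$1$ vectors having precisely $h$ entries equal to $1$; this is provable by a direct exchange argument, since any feasible point with two fractional coordinates can be perturbed by raising one and lowering the other while preserving the sum, and so cannot be a vertex. A linear objective over a polytope attains its minimum at a vertex, and here an integral minimizer is simply the indicator vector $\hat{\vc{w}}$ of the $h$ observations with smallest $\norm{\vc{x}_i}_1$. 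This $\hat{\vc{w}}$ is feasible for $\text{MILP-LTAD}$ and attains the value $z_{LP}$.

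To finish, I would chain the inequalities $z_{LP} \le z_{MILP} \le \sum_{i:\hat{w}_i = 1} \norm{\vc{x}_i}_1 = z_{LP}$, where the first bound is relaxation, the second uses feasibility of $(\hat{\vc{w}}, \vc{0})$ for the MILP, and the final equality is the reduction from the previous paragraph. Equality throughout shows $z_{LP} = z_{MILP}$, so $(\vc{w}^*_{LP}, \vc{0})$, which attains $z_{LP}$, attains the MILP optimal value and is optimal. The step I expect to be the main obstacle is justifying that the particular optimizer $\vc{w}^*_{LP}$ is itself integral rather than merely that some integral optimum exists: when several $\norm{\vc{x}_i}_1$ tie at the $h$-th smallest value, the hypersimplex admits fractional optima. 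I would resolve this either by restricting attention to a basic (vertex) optimal solution, which the vertex-integrality of the hypersimplex then renders $0$--$1$, or by adopting a mild general-position assumption that rules out such ties.
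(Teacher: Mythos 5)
Your proof takes essentially the same route as the paper's: substitute $\vc{m}^*_{LP}=\vc{0}$ so the objective collapses to the linear function $\sum_{i=1}^{n} w_i \norm{\vc{x}_i}_1$ over the hypersimplex $\{\vc{w}\in[0,1]^n : \sum_i w_i = h\}$, whose minimum is attained by the $0$--$1$ indicator of the $h$ observations with smallest one-norm, giving a point feasible for MILP-LTAD with value equal to the relaxation bound. Your explicit treatment of the tie case (where $\vc{w}^*_{LP}$ could be fractional and one must pass to a vertex optimal solution) is in fact more careful than the paper, which simply asserts the integrality of $\vc{w}^*_{LP}$ from the equality $\sum_i w_i^*\norm{\vc{x}_i}_1 = \sum_{i=1}^{h}\norm{\vc{x}_{(i)}}_1$ without addressing degeneracy.
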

\begin{proof}
Let $f^*_{LP}$ and $f^*_{MILP}$ be the optimal solutions of LP-LTAD and MILP-LTAD respectively. If $\vc{m}^*_{LP}=\vc{0}$ then
\[
f^*_{LP} = \sum_{i=1}^{n} \| w_{i}^{*} \vc{x}_{i} - \vc{m}_{LP}^{*} \|_{1} = \sum_{i=1}^{n} w_{i}^{*} \|x_{i}\|_{1} = \sum_{i=1}^{h} \|x_{(i)}\|_{1}
\]
which implies that $w^{*}_{i} = 1$ if $i=(i)$ and zero otherwise; or equivalently $w_{LP}^{*} \in \{0,1\}^{n}$. Thus, $(\vc{w}_{LP}^{*},\vc{m}^*_{LP})$
is feasible to MILP-LTAD and $f^*_{LP} = f^*_{MILP}$.
\end{proof}

Lemma~\ref{lemma1} implies that if we could transform the data in such a way that $\mu_{LP}^*$ gets closer to zero, then we can just solve the LP problem to obtain an approximated solution for the LTAD. 
This leads  to the procedure described in Algorithm~\ref{alg:LTAD}, where the data is iteratively transformed such that its median value is less than some small value $\epsilon$. 
\begin{algorithm}
\caption{Linear Programming Approach for Solving LTAD}  
\label{alg:LTAD}
    \begin{algorithmic}[1]
      \REQUIRE {data $X_{n} = \{\vc{x}_1, \ldots, \vc{x}_n\}$, coverage $h$, accuracy $\epsilon$}
      \ENSURE  {a set of $h$ data points of $X_n$ as indicated by the characteristic vector $\vc{w}_{LP}$} 
      \vspace{0.05in}
      \WHILE {TRUE}
         \STATE $(\vc{w}_{LP}^{*}, \vc{m}_{LP}^{*})$ = LP-LTAD($X_{n},h$)
          \IF {$\norm{\vc{m}^*_{LP}}  < \epsilon$}
             \RETURN $\vc{w}_{LP}^{*}$
          \ELSE
              \STATE $\vc{x}_{i} :=  \vc{x}_{i} -\vc{m}^*_{LP}, ~\forall~ i=1,...,n$
           \ENDIF
       \ENDWHILE
 \end{algorithmic}
\end{algorithm}

\section{Solution of the LP relaxation}\label{sec_LPs}
In Algorithm~\ref{alg:LTAD} we have to solve the associated linear programming problem in each iteration, until $\vc{m}_{LP}$ converges to a value smaller than $\epsilon$.  
The LP   as defined in \eqref{eq_ltad_ip_4}, has $(np+n+p)$ decision variables and $(2np+2n+1)$ 
constraints and can be solved efficiently for relatively small $(n,p)$. 
However, for large values of $n,p$ (e.g. $n=10000$ and $p = 100$), the problem has a million decision variables and two million constraints. Although this is still solvable, we need to
find an efficient solution method since there will be multiple calls to the solution of this LP. 
 In what follows we will exploit the special structure of the problem to develop such a method. 

Given $\vc{w}$, let $m_j(\vc{w})$ be the corresponding median of vector $\{w_i x_{ij} : i = 1,\ldots,n\}$. 
This means $\vc{m}(\vc{w}) = (m_1(\vc{w}),\ldots,m_p(\vc{w}))$  is an optimal solution of \eqref{eq_ltad_ip_4} for a fixed $\vc{w}$ . 
Letting $f(\vc{w}) = \sum_{i=1}^n \norm{w_i \vc{x}_i - \vc{m}(\vc{w})}_1$ we can write \eqref{eq_ltad_ip_4} as
\begin{eqnarray*}
\min_{\vc{w}} && f(\vc{w}),\\
s.t. && \sum_{i=1}^n w_i = h, \\
&&  0 \leq w_i \leq 1.
\end{eqnarray*}
Here we have transformed the original problem into a new optimization problem in $\mathbb{R}^n$ that has a nice constraint set. However, the objective function is non-linear. 
Rewriting $f(\vc{w})$ we have 
\begin{eqnarray*}
f(\vc{w}) &=& \sum_{i=1}^n \sum_{j=1}^p  |w_i x_{ij} - m_j(\vc{w})|\\
    &=& \sum_{j=1}^p  \sum_{i=1}^n |w_i x_{ij} - m_j(\vc{w})|\\
    &=& \sum_{j=1}^p  \sum_{i=1}^n |w_i x_{ij} - median(\{w_i x_{ij}:i=1,\ldots,n\})|\\
    &=& \sum_{j=1}^p \left[ \sum_{i=1}^n w_i x_{ij} - 2 \min_{|S| = n/2} \sum_{k \in S} w_k x_{ik} \right],\\
\end{eqnarray*}
which is a piece-wise convex function because it is the sum of a linear function and the maximum of linear functions. 
This in turn implies that we can solve the problem using a projected subgradient method, which is  shown in Algorithm~\ref{alg:subgradient}.
\begin{algorithm}
\caption{Subgradient method for solving LP-LTAD}  
\label{alg:subgradient}
    \begin{algorithmic}[1]
      \REQUIRE {initial $\vc{w}^0$,  $\alpha = 1$ and tolerance $\epsilon$}
      \ENSURE  {solution $\vc{w}^{*}$ to LP-LTAD} 
      \vspace{0.05in}
      \STATE $k=0$
      \WHILE {TRUE}
          \STATE Find subgradient $\vc{d}^k = \nabla f(\vc{w})$ and set $\bar{\vc{w}} = \vc{w}^k - \alpha \vc{d}^k$
          \STATE Find the projection $\vc{w}^{k+1}$ of $\bar{\vc{w}}$ on the polyhedron $F = \{\vc{w} : \sum_{i=1}^n w_i = h, 0 \leq w_i \leq 1\}$
          \IF {$\norm{\vc{w}^{k+1}-\vc{w}^k} < \epsilon$}
             \RETURN $\vc{w}^{k+1}$ 
          \ELSE
              \STATE $k=k+1$ 
           \ENDIF
       \ENDWHILE
 \end{algorithmic}
\end{algorithm}

In order to apply the projected subgradient method depicted in Algorithm~\ref{alg:subgradient}, we need to resolve the following three issues: (a) find good initial starting $\vc{w}^{0}$, 
(b) compute the subgradients, and (c) perform efficient projection onto the polyhedron. 
Methods for resolving these issues will be presented in the next subsections.

\subsection{Finding good initial starting point $\vc{w}^0$}
We will find an initial starting point $\vc{w}^0$ by finding a local optimal solution $(\vc{w},\vc{m})$ for the original problem. 
The idea is to start with an arbitrary initial solution $(\vc{w}^0,\vc{m}^0)$, set $k=0$, and repeat the following steps:
\begin{itemize}
    \item[(a)] Fix $\vc{m}=\vc{m}^{k}$ and solve for the corresponding optimal $\vc{w}^{k+1}$
    \item[(b)] If $\norm{\vc{w}^{k+1}-\vc{w}^k} < \epsilon$, return $\vc{w}^{k+1}$ and terminate the procedure.
                  Otherwise, fix $\vc{w}=\vc{w}^{k+1}$ and solve for the corresponding optimal $\vc{m}^{k+1}$. Set $k=k+1$ and go back to step (a).
\end{itemize}
In step (b), for each fixed $\vc{w}$ finding the corresponding optimal $\vc{m}$ is easy, as $m_j$ can be set as the median of  
$\{w_i x_{ij} : i = 1,\ldots,n\}$. Finding the optimal $\vc{w}$ for each fixed $\vc{m}$ is non-trivial unless we reformulate it as an LP, 
but this will be computationally inefficient for large $(n,p)$. A more efficient method is to use a subgradient method to solve the Lagrangian 
dual problem by noticing that the problem has only one linking constraint $\sum_{i=1}^n w_i = k$.
Specifically, for a given $\vc{m}$, we need to solve
\begin{eqnarray}
\min_{\vc{w}} && \sum_{i=1}^n \norm{w_i \vc{x}_i - \vc{m}}_1,\nonumber\\
s.t. && \sum_{i=1}^n w_i = k, \label{constr:Init_w}\\
&&  0 \leq w_i \leq 1.\nonumber
\end{eqnarray}
Let $\delta$ be the Lagrangian multiplier of the equality constraint (\ref{constr:Init_w}). The Lagrangian dual problem is
\begin{eqnarray*}
\max_{\delta}&&  \left( k\delta + \min_{0 \leq w_{i} \leq 1} ~~ \sum_{i=1}^n \norm{w_i \vc{x}_i - \vc{m}}_1 - \delta \vc{w}^{T} \vc{e} \right),\nonumber\\
\end{eqnarray*}
which can be further simplified as
\begin{eqnarray*}
\max_{\delta}~~\left( k\delta + \sum_{i=1}^n \left[ \min_{0 \leq w_i \leq 1} ~~ |w_i \vc{x}_i - \vc{m}| - \delta w_i \right] \right),
\end{eqnarray*}
For each fixed $\delta$, the inner problem has a closed form solution for $w_i$ by noticing that the function $g_i(w_i) = \norm{w_i \vc{x}_i - \mu}_1 - \delta w_i$ is piece-wise 
convex with at most $(p+1)$ pieces that join each other at $m_j/x_{ij}$. This means we can find the optimal $w_i$ by simply comparing the objective values at those joints that 
belong to $[0,1]$. As the inner problem has a closed-form solution and as the outer problem has only a single variable $\delta$, the Lagrangian dual problem can be solved 
very efficiently. Summarizing, we can repeatedly find improving $(\vc{w}^k,\vc{m}^k)$ and stop the process at a local optimal solution of \eqref{eq_ltad_ip_4}.

For finding subgradients, we notice that
\begin{eqnarray*}
f(\vc{w}) &=& \sum_{i=1}^n \norm{w_i \vc{x}_i - \vc{m}(\vc{w})} = \sum_{j=1}^p \underbrace{\sum_{i=1}^n |w_i x_{ij} - m_j(\vc{w})|}_{f_j(\vc{w})},
\end{eqnarray*}
where
\begin{eqnarray*}
\frac{\partial f}{\partial w_k} &=& \sum_{j=1}^p \frac{\partial f_j}{\partial w_k}= \sum_{j=1}^p x_{kj} sign(w_k x_{kj} - m_j(\vc{w})).
\end{eqnarray*}

\subsection{Finding a projection}
The projection of a point on a polyhedron can be found by solving a convex quadratic optimization problem. 
However, this is not a computationally  efficient way and we need to find an alternative by exploiting the special constraint set for $\vc{w}$. 
Notice that this includes only one hyperplane $\sum_{i=1}^n w_i = k$ and a set of box constraints. Thus, the projection of any point $\vc{w}$ into this polyhedron can be found through two steps:
\begin{itemize}
    \item[(a)] Finding the projection $\vc{w}_P$ of $\vc{w}$ onto the plane $\sum_{i=1}^n w_i = k$ which has a closed form solution.
    \item[(b)] Finding the projection $\vc{w}_B$ of $\vc{w}_P$ into the box constraints. This is simply done by  setting:
\[
w_{B,i} = \begin{cases}
 w_{P,i} & \text{ if } 0 \leq w_{P,i} \leq 1,\\
 0 &  \text{ if } w_{P,i} < 0,\\
 1 &  \text{ if } w_{P,i} > 1.
<\end{cases}
\]
\end{itemize}

\section{Computational experiments} \label{sec_computations}
In this section  the performances of LP-LTAD and MILP-LTAD estimators are compared against the performance  a heuristic  iterative algorithm for solving 
the LTAD based on the Algorithm 2.1 given in~\cite{ChaPitZiou:2015}.
The solutions of the associated problems \eqref{eq_ltad_ip_4}  and \eqref{eq_ltad_ip_3},  were computed using the solver FortMP/QMIP which is 
a Fortran code provided by \cite{MiGuEll:03}. The computation of the LTAD solutions were obtained by a MATLAB
implementation of the algorithm in  \cite{ChaPitZiou:2015}.

\subsection{Empirical efficiency}

Most of the robust estimators in the literature choose a priori a coverage of $h = \frac{n}{2}$, which yields a clean subsample of minimum size.
However, if there are fewer outliers in the sample than half of the observations, then information will be discarded when calculating robust estimates based on this. 
As a consequence these estimates suffer from low efficiency. One solution to this problem is to adapt $h$, resulting in more efficient estimators which have 
lower breakdown points. In other words, most robust estimators have to deal with this robustness versus efficiency trade off.

A typical procedure for empirically evaluating the efficiency of robust estimators,  is to apply the estimators on a clean data set and compare their performance. 
We conducted a simulation with a  a sample data set of 100 observations that follow the standard normal distribution with $N(\vc{0},\vc{I})$. 
After 100 replications, the comparison criterion is the average classical center estimate, or median,  for the different coverage sizes  
$h=50\%, 60\%,70\%$ and $80\%$ as it is demonstrated in Table \ref{tltad_21}.
\begin{table}[h!]
\centering
\begin{tabular}{c c c c c c c c c c}
& coverage $h$ & LP-LTAD & MILP-LTAD & LTAD \\ \hline
\multirow
{3}{*}
& 50$\%$ & 0.0000  & 0.0000  & 0.0407 \\ 
& 60$\%$ & 0.0001  & 0.0002  & 0.0327  \\ 
& 70$\%$ & 0.0009  & 0.0009  & 0.0301 \\ 
& 80$\%$ & 0.0011  & 0.0012  & 0.0294 \\ \hline
\end{tabular}
\caption{Median estimate for data set of normal distribution, $N(0,1)$} \label{tltad_21}
\end{table}
We observe from Table~\ref{tltad_21}, that the proposed robust location estimator  LP-LTAD  improves significantly with respect to efficiency, as  the coverage 
$h$ decreases. For the smallest coverage $h=0.5$, which means that $50\%$ of the observations are considered as clean, it yields a location estimate which is
the true value $\vc{m}=\vc{0}$. On the contrary, the ordinary LTAD losses in efficiency as the coverage decreases, resulting in a biased location estimate.

\subsection{Real data}
For our computational experiment involving real data, we will use the data by~\cite{Stefan} for the L1-type estimator. 
The data set originates from the \textit{The Data and Story Library}\footnote{\tt http://lib.stat.\-cmu.edu/\-DASL/\-Stories/\-Forbes500CompaniesSales.html},
which contains facts regarding 79 companies selected from the Forbes 500 list of 1986. 
We consider  the following six variables: {\em assets} (amount of assets in the company in millions), {\em sales} (amount of sales in millions), 
{\em market-value} (market-value of the company in millions), {\em profits} (profits in millions), {\em cash-flow} (cash-flow in millions) 
and {\em employees} (number of employees in thousands). 
We apply the L1-type, LP-LTAD, MILP-LTAD, and LTAD estimators to find an estimate of location. Table~\ref{tltad_32} compares the location estimates 
with the empirical mean. Clearly, there are large differences between the locations estimates of the above estimators with the empirical mean. 
The empirical means are much higher than all estimators. These differences are caused by the presence of outliers in the data set, which greatly affect the
mean statistic. We can see that all estimators perform comparably with respect to obtaining a robust location estimate.

\begin{table}[h!]
\centering
{\footnotesize
\begin{tabular}{cc c c c c c c c c}
&  & Assets & Sales & Market-value & Profits & Cash-flow & Employees \\ \hline
& Empirical mean & 5940.53 & 4178.29 & 3269.75 & 209.84 & 400.93 & 37.60 \\
& L1-type mean  & 2679.33 &  1757.50 & 1099.14 & 89.90 & 164.10 & 15.63 \\
& LP-LTAD       & 2677.21 & 1752.72 & 1096.78 & 88.71 & 164.02 & 15.19 \\
& MILP-LTAD      & 2680.35 & 1754.13 & 1099.24 & 90.18 & 165.09 & 16.01 \\
& LTAD      & 2681.25 & 1756.11 & 1098.21 & 90.03 & 164.97 & 16.07 \\ \hline
\end{tabular}
}
\caption{Estimate for the location of the Forbes data set}\label{tltad_32}
\end{table}

\subsection{Simulation results}

To study the finite-sample robustness and efficiency of the three robust location estimates, we perform simulations with contaminated data sets. 
In each simulation we generate 100 data sets based on a  multivariate normal distribution $N_p(\vc{0},\vc{I})$ with $p=1,2,3,5$ and sample sizes $n=50,100$. 
To generate contaminated data sets we replaced $\epsilon \in \{20\%, 40\%\}$ of the data $\{\vc{x}_1,\ldots,\vc{x}_n\}$ with outliers from a multivariate normal 
distribution, $N_p(3.3,0.3{^2})$.
For each of the aforementioned data sets we obtain the LP-LTAD and MILP-LTAD local estimate $\hat{\vc{m}}$ by computing the solution of the 
problems formulated in \eqref{eq_ltad_ip_4}  and \eqref{eq_ltad_ip_3}, respectively. 
After 100 replications, we record the mean square errors (MSE) as a performance criterion for comparison between the robust local estimates, 
given as 
\begin{eqnarray*}
\text{MSE} = \frac{\sum_{i=1}^{100} \norm{\hat{\vc{m}}}^{2}}{100}.
\end{eqnarray*}

The results are shown in Tables~\ref{tltad_2},~\ref{tltad_3} and~\ref{tltad_7}. In Table~\ref{tltad_2}, all three estimators use half sample coverage $h = 50\%$. 
We observe that the performance of the new approach LP-LTAD is quite competitive compared to the MILP-LTAD and the LTAD.
\begin{table}[h!]
\centering
\begin{tabular}{c c c c c c c c c c}
& & LP-LTAD & MILP-LTAD & LTAD  \\ \hline
&$\epsilon$& 50$\%$ &  50$\%$ & 50$\%$  \\
\hline
\multirow
{3}{*}{$p=1$}
& 0$\%$  & 0.0009  & 0.0010  & 0.0008  \\
& 20$\%$ & 0.0015  & 0.0030  & 0.0014  \\
& 40$\%$ & 0.0069  & 0.0068  & 0.0076 \\ \hline
\multirow
{3}{*}{$p=2$}
& 0$\%$   & 0.0010 & 0.0015 & 0.0014    \\
& 20$\%$  & 0.0300 & 0.0358 & 0.0311   \\
& 40$\%$  & 0.0387 & 0.0450 & 0.0323 \\ \hline
\multirow
{3}{*}{$p=3$}
& 0$\%$   & 0.0091 & 0.0094 & 0.0024   \\
& 20$\%$  & 0.0410 & 0.0401 & 0.0801   \\
& 40$\%$  & 0.0415 & 0.0858 & 0.1091  \\ \hline
\end{tabular}
\caption{MSE of local estimates, $n=50$, $p=1,2,3$}\label{tltad_2}
\end{table}
In Table~\ref{tltad_3} the proposed models LP-LTAD and MILP-LTAD use as coverage $h=20\%$, 
while the LTAD  estimator  $h=50\%$ which is the smallest that we can use in this case. 
\begin{table}[h!]
\centering
\begin{tabular}{c c c c c c c c c c}
& & LP-LTAD & MILP-LTAD & LTAD   \\ \hline
&$\epsilon$& 20$\%$ &  20$\%$ & 50$\%$  \\
\hline
\multirow{3}{*}{$p=1$}
& 0$\%$  & 0.0001 & 0.0009 &0.0008 \\
<& 20$\%$ & 0.0006 & 0.0015 & 0.0014\\
& 40$\%$ & 0.0039 & 0.0044 & 0.0076 \\ \hline
\multirow{3}{*}{$p=2$}
& 0$\%$  & 0.0009 & 0.0013 & 0.0014 \\
& 20$\%$ & 0.0244 & 0.0279 & 0.0311 \\
& 40$\%$ & 0.0281 & 0.0283 & 0.0323 \\ \hline
\multirow{3}{*}{$p=3$}
& 0$\%$  & 0.0017 & 0.0024 & 0.0024& \\
& 20$\%$ & 0.0101 & 0.0303 & 0.0801& \\
& 40$\%$ & 0.0105 & 0.0309 & 0.1091& \\ \hline
\end{tabular}
\caption{MSE of local estimates, $n=50$, $p=1,2,3$} \label{tltad_3}
\end{table}
Note  that the new estimators have improved their performance, and the LP-LTAD outperforms all. 
The results in Table~\ref{tltad_7} reveal that when the sample size increases all the estimators have similar performance. 
\begin{table}[h!]
\centering
\begin{tabular}{c c c c c c c c c c c c}
& & LP-LTAD & MILP-LTAD & LTAD  \\ \hline
&$\epsilon$ & 20$\%$ &  20$\%$ & 50$\%$  \\
\hline
\multirow
{3}{*}{$p=1$}
&0$\%$  & 0.0001 & 0.0001  & 0.0007 \\
&20$\%$ & 0.0004 & 0.0004  & 0.0012 \\
&40$\%$ & 0.0024 & 0.0023  & 0.0031 \\ \hline
\multirow
{3}{*}{$p=3$}
&0$\%$  & 0.0008 & 0.0010  & 0.0014 \\
&20$\%$ & 0.0081 & 0.0090  & 0.0805 \\
&40$\%$ & 0.0151 & 0.0199  & 0.0954 \\ \hline
\multirow
{3}{*}{$p=5$}
&0$\%$  & 0.0015 & 0.0025  & 0.0084 \\
&20$\%$ & 0.0094 & 0.0109  & 0.0906 \\
&40$\%$ & 0.0171 & 0.0210  & 0.1121 \\ \hline
\end{tabular}
\caption{MSE of local estimates, $n=100$, $p=1,3,5$} \label{tltad_7}
\end{table}

In order to investigate the effect of the presence of a correlation structure within the simulated data on the performance of the algorithms, we use a covariance 
matrix $P'$ for data generation, with elements 1 in the diagonal, and numbers $\rho$ as off-diagonal elements. We prefer a value of $\rho=0.70$ among the different simulation scenarios. Similar structures for simulated correlation data have been proposed by \cite{doi:10.1198/004017002188618509} and \cite{Hubert10adeterministic}.
The results are shown in Tables~\ref{tltad_5},~\ref{tltad_4} and~\ref{tltad_8}. We observe that the effect of the correlation does not influence the performance of the LP-LTAD estimator which outperforms the other two. 
It should be noted that the reduction of coverage from 50$\%$ to 20$\%$ does not improve the performance of the LP-LTAD.
\begin{table}[h!]
\centering
\begin{tabular}{c c c c c c c c c c}
\hline
& & LP-LTAD & MILP-LTAD & LTAD   \\ \hline
&$\epsilon$ & 50$\%$ &  50$\%$ & 50$\%$  \\
\hline
\multirow{3}{*}{$p=2$}
& 0$\%$  & 0.0005 & 0.0064  & 0.0026 \\
& 20$\%$ & 0.0041 & 0.0693  & 0.0138 \\
& 40$\%$ & 0.0376 & 0.1965  & 0.1262 \\ \hline
\multirow{3}{*}{$p=3$}
& 0$\%$  & 0.0009 & 0.0626  & 0.0199  \\
& 20$\%$ & 0.0213 & 0.1363  & 0.6115  \\
& 40$\%$ & 0.0773 & 0.2545  & 0.7672 \\ \hline
\end{tabular}
\caption{MSE of local estimates, $n=50$, correlation $\rho$=0.7, $p=2,3$} \label{tltad_5}
\end{table}
\begin{table}[h!]
\centering
\begin{tabular}{c c c c c c c c c c}
& & LP-LTAD & MILP-LTAD & LTAD   \\ \hline
&$\epsilon$& 20$\%$ &  20$\%$ & 50$\%$   \\
\hline
\multirow{3}{*}{$p=2$}
& 0$\%$  & 0.0004 & 0.0014 & 0.0026  \\
& 20$\%$ & 0.0030 & 0.0141 & 0.0138  \\
& 40$\%$ & 0.0164 & 0.1203 & 0.1262 \\ \hline
\multirow{3}{*}{$p=3$}
& 0$\%$  & 0.0005 & 0.0037  & 0.0199   \\
& 20$\%$ & 0.0056 & 0.1085  & 0.6115   \\
& 40$\%$ & 0.0225 & 0.2268  & 0.7672  \\ \hline
\end{tabular}
\caption{MSE of local estimates, $n=50$, correlation $\rho$=0.7, $p=2,3$} \label{tltad_4}
\end{table}
\begin{table}[h!]
\centering
\begin{tabular}{c c c c c c c c c c c c}
& & LP-LTAD & MILP-LTAD & LTAD  \\ \hline
&$\epsilon$& 20$\%$ &  20$\%$ & 50$\%$ \\
\hline
\multirow
{3}{*}{$p=3$}
&0$\%$  & 0.0009 & 0.0012  & 0.0094  \\
&20$\%$ & 0.0110 & 0.0210  & 0.0994  \\
&40$\%$ & 0.0215 & 0.0216  & 0.1314 \\ \hline
\multirow
{3}{*}{$p=5$}
&0$\%$  & 0.0019 & 0.0019  & 0.0105 \\
&20$\%$ & 0.0201 & 0.0301  & 0.1204 \\
&40$\%$ & 0.0274 & 0.0325  & 0.1904 \\ \hline
\end{tabular}
\caption{MSE of local estimates, $n=100$, correlation $\rho=0.7$, $p=3,5$} \label{tltad_8}
\end{table}

To illustrate the performance of the estimators on data contaminated with \textit{intermediate} outliers, we replaced 20\% or 40\% of the first rows of the 
multivariate sample, $N_p(\vc{0},\vc{I})$, with intermediate outliers from a multivariate normal distribution $N_p(0.75,0.5)$, as suggested by~\cite{roel2009}. 
We prefer to reduce the coverage to $h=20\%$, among the different scenarios $30\%,40\%,50\%$, which enables the new LP approach to identify the 
intermediate outliers. In the  results given by Tables~\ref{tltad_6} and~\ref{tltad_9} it is evident that the LP-LTAD outperforms the other estimators. 
This is especially true for heavily contaminated data.
\begin{table}[h!]
\centering
\begin{tabular}{c c c c c c c c c c}
& & LP-LTAD & MILP-LTAD & LTAD  \\ \hline
&$\epsilon$& 20$\%$ &  20$\%$ & 50$\%$  \\
\hline
\multirow{3}{*}{$p=1$}
& 0$\%$  & 0.0001 & 0.0007 & 0.0009 \\
& 20$\%$ & 0.0001 & 0.0421 & 0.0411\\
& 40$\%$ & 0.0018 & 0.0621 & 0.1429\\ \hline
\multirow{3}{*}{$p=2$}
& 0$\%$  & 0.0006 & 0.0426 & 0.0451\\
& 20$\%$ & 0.0391 & 0.1146 & 0.3363\\
& 40$\%$ & 0.0415 & 0.1791 & 0.3946\\ \hline
\multirow{3}{*}{$p=3$}
& 0$\%$  & 0.0184 & 0.0689 & 0.0934\\
& 20$\%$ & 0.1120 & 0.3442 & 0.3498 \\
& 40$\%$ & 0.1230 & 0.3562 & 0.3765\\ \hline
\end{tabular}
\caption{MSE of local estimates, $n=50$, $p=1,2,3$} \label{tltad_6}
\end{table}

\begin{table}[h!]
\centering
\begin{tabular}{c c c c c c c c c c }
& & LP-LTAD & MILP-LTAD & LTAD  \\ \hline
&$\epsilon$& 20$\%$ &  20$\%$ & 50$\%$  \\
\hline
\multirow{3}{*}{$p=1$}
& 0$\%$  & 0.0001 & 0.0001 & 0.0001 \\
& 20$\%$ & 0.0002 & 0.0211 & 0.0345\\
& 40$\%$ & 0.0009 & 0.0321 & 0.1425\\ \hline
\multirow{3}{*}{$p=3$}
& 0$\%$  & 0.0009 & 0.0091 & 0.0101\\
& 20$\%$ & 0.0315 & 0.0846 & 0.2214\\
& 40$\%$ & 0.0318 & 0.0855 & 0.2319\\ \hline
\multirow{3}{*}{$p=5$}
& 0$\%$  & 0.0009 & 0.0102 & 0.0798\\
& 20$\%$ & 0.0517 & 0.1054 & 0.2421\\
& 40$\%$ & 0.0518 & 0.1094 & 0.2541\\ \hline
\end{tabular}
\caption{MSE of local estimates, $n=100$, $p=1,3,5$} \label{tltad_9}
\end{table}

Finally we generate a large data set with  $n=500$ and $p=10,20$ with the same contamination and distributions as the previous simulations. 
The results are shown in Tables~\ref{tltad_10} and~\ref{tltad_11}. 
\begin{table}[h!]
\centering
\begin{tabular}{c c c c c c c c c c c c}
& & LP-LTAD & MILP-LTAD & LTAD   \\ \hline
&$\epsilon$& 20$\%$ &  20$\%$ & 50$\%$  \\
\hline
\multirow
{3}{*}{$p=10$}
&0$\%$  & 0.0098 & 0.0099  & 0.0167  \\
&20$\%$ & 0.0109 & 0.0111  & 0.0977 \\
&40$\%$ & 0.0109 & 0.0112  & 0.0999 \\ \hline
\multirow
{3}{*}{$p=20$}
&0$\%$  & 0.0104 & 0.0110  & 0.0198  \\
&20$\%$ & 0.0121 & 0.0134  & 0.1112 \\
&40$\%$ & 0.0124 & 0.0139  & 0.1123 \\ \hline
\end{tabular}
\caption{MSE of local estimates, $n=500$, $p=10$} \label{tltad_10}
\end{table}
\begin{table}[h!]
\centering
\begin{tabular}{c c c c c c c c c c}
& & LP-LTAD & MILP-LTAD & LTAD  \\ \hline
&$\epsilon$& 20$\%$ &  20$\%$ & 50$\%$  \\
\hline
\multirow{3}{*}{$p=10$}
& 0$\%$  & 0.0091 & 0.0098 & 0.0107 \\
& 20$\%$ & 0.0108 & 0.0112 & 0.3155 \\
& 40$\%$ & 0.0108 & 0.0112 & 0.3385 \\ \hline
\multirow{3}{*}{$p=20$}
& 0$\%$  & 0.0101 & 0.0101 & 0.0110 \\
& 20$\%$ & 0.0131 & 0.0142 & 0.4514 \\
& 40$\%$ & 0.0132 & 0.0143 & 0.4919 \\ \hline
\end{tabular}
\caption{MSE of local estimates, $n=500$, $p=20$} \label{tltad_11}
\end{table}

In summary,  based on the computational results we can conclude that  there are negligible differences between the three estimators for non-correlated data and 
contaminated with strong outliers. In the case of correlated data LP-LTAD has the best performance. Also, if the data is contaminated with intermediate 
outliers the LP-LTAD has is superior because it can work with coverage less than $50\%$ while, on the other hand, the LTAD includes some of the intermediate 
outliers into the coverage set so they become masked.

\section{Conclusions} \label{sec_conclusions}
In this work, we develop numerical methods for computing  the multivariate LTAD estimator based on the $L^1$ norm,  by reformulating its original mixed integer 
nonlinear formulation. 
We show that the MINLP is equivalent to an MILP and subsequently to an LP under some conditions on the location estimate.  An LP-based iterative approach 
is then developed for computing the estimator, by transforming the data and solving the resulting linear programs by subgradient optimization. 
The new LP-LTAD formulation can also be viewed as a new trimming procedure that trims away large residuals implicitly by shrinking the associated observations to zero. 
The new approach yields a robust location estimate without loosing efficiency. We perform numerical experiments and show that the new estimate performs well 
even in the case of contaminated and correlated multivariate data. The LP-LTAD procedure can be used when the data involves both type of outliers, strong and 
intermediate,  and also when the coverage is smaller than half the sample observations.

\bibliographystyle{plain}

\end{document}